\newcommand{\RNum}[1]{\uppercase\expandafter{\romannumeral #1\relax}}
\newlist{steps}{enumerate}{1}
\setlist[steps, 1]{label = Step \arabic*:}
\theoremstyle{plain}
\newtheorem{theorem}{Theorem}[section]
\newtheorem{definition}[theorem]{Definition}
\newtheorem{proposition}[theorem]{Proposition}
\newtheorem{lemma}[theorem]{Lemma}
\newtheorem{remark}[theorem]{Remark}
\newtheorem{corollary}[theorem]{Corollary}
\theoremstyle{definition}
\newtheorem{exmp}[theorem]{Example}
\numberwithin{equation}{section}
\newtheorem*{theorem*}{Theorem}
\title{A Groupoid Approach to the Riemann Integral (and Path Integral Quantization of the Poisson Sigma Model)}
\author{Joshua Lackman\footnote{jlackman@math.toronto.edu}}
\date{}
\begin{document}

\maketitle
\begin{abstract}
\noindent We use groupoids and the van Est map to define Riemann sums on compact manifolds (with boundary), in a coordinate-free way. These Riemann sums converge to the usual integral after taking a limit over all triangulations of the manifold. We show that the van Est map determines the $n$-jet of antisymmetric $n$-cochains. We discuss using this Riemann sum construction to put the Poisson sigma model on a lattice.
\end{abstract}
\tableofcontents
\section{Introduction}
The Riemann integral of a function defined on a rectangular subset of $\mathbb{R}^n$ is defined by using limits of Riemann sums,  whereas the integral of a differential form on an oriented compact manifold is defined by choosing local coordinates and adding up the local integrals using a partition of unity. In this paper we are going to show how we can define Riemann sums associated to differential forms on compact manifolds (with boundary) without choosing local coordinates or a partition of unity. On rectangular subsets of $\mathbb{R}^n$ we recover the usual construction of the Riemann integral. 
\\\\Our approach to the Riemann integral has advantages when studying path integrals originating in the Poisson sigma model (see \cite{strobl}), in particular ones which are related to Kontsevich's solution of the deformation quantization problem of Poisson manifolds (see \cite{kontsevich}). This is because our construction of the Riemann integral naturally leads to a lattice formulation of the aforementioned path integrals, generalizing the construction from usual quantum mechanics. We will briefly discuss this idea in this paper, and we will expand on it in upcoming work — for an in depth precursor, see \cite{Lackman3}.
\\\vspace{0.1cm}\\The construction of the Riemann sums is as indicated in the following theorem, which is the main result of this paper: 
\begin{theorem}[Main]\label{theorem}
Let $X$ be an oriented $n$-dimensional compact manifold (possibly with boundary), and let $\omega$ be an $n$-form. We define the Riemann integral by completing the following steps:
\begin{enumerate}
    \item Antidifferentiate\footnote{This is antidifferentiation in the sense of the van Est map, see \cref{anti}.} $\omega$ to a normalized, $S_n$-antisymmetric $n$-cochain $\Omega$ defined on a neighborhood of the identity bisection inside $\mathbf{B}^n \textup{Pair}(X)=X^{n+1}\,.$ 
    \item Choose an oriented triangulation $\Delta_X$ of $X$ (ie. choose an ordering of the vertices in the simplicial complex)\footnote{This ordering should be chosen so that it determine a class in $H_n(X,\partial X,\mathbb{Z})$ which agrees with the orientation of $X\,.$} and form the associated simplicial set, denoted $X_{\Delta}\,.$ Due to the fact that there is a unique arrow between any two objects in $\textup{Pair}(X),$ we get a natural embedding $\iota:X_{\Delta}\xhookrightarrow{} \textup{Pair}(X)\,.$
    \item We can pull back $\Omega$ to get a cochain $\iota^*\Omega$ on $X_{\Delta}\,.$ The Riemann sum $S_{\Delta}(\omega)$ is then defined to be the sum of $\iota^*\Omega$ over all $n$-simplices in $X_{\Delta}:$
   
    \begin{equation}\label{Riemann sum}
        S_{\Delta}(\omega)=\sum_{n-\textup{simplices } \Delta^n}\iota^*\Omega(\Delta^n)\,.
    \end{equation}
    \item The Riemann integral is then equal to the limit of \ref{Riemann sum} over triangulations of $X$ (after picking any smooth triangulation, we obtain a directed set ordered by linear subdivision, eg. barycentric subdivision):
    \begin{equation}
        \int_X \omega=\lim\limits_{{\Delta}_X} S_{\Delta}(\omega)\,.
    \end{equation}
\end{enumerate}
This construction of the Riemann integral agrees with the usual integral.
\end{theorem}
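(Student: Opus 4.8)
The plan is to reduce the statement to a per-simplex estimate governed by the $n$-jet result and then to sum. First I would use that, for any smooth oriented triangulation compatible with the chosen class in $H_n(X,\partial X,\mathbb{Z})$, the ordinary integral decomposes exactly as $\int_X\omega=\sum_{\Delta^n}\int_{\Delta^n}\omega$, where $\int_{\Delta^n}\omega$ is the integral of $\omega$ over the geometric simplex (the image of the affine simplex under the triangulating map). This is just the statement that the triangulation represents the fundamental class, and the orientation hypothesis in Step 2 is precisely what makes the signs match. Consequently it suffices to show that the error net $E_\Delta:=S_\Delta(\omega)-\int_X\omega=\sum_{\Delta^n}\bigl(\iota^*\Omega(\Delta^n)-\int_{\Delta^n}\omega\bigr)$ tends to $0$ along the directed set of subdivisions.

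Second, I would localize. Since $\Omega$ is only defined near the identity bisection, I would pass far enough along the directed set that every $n$-simplex of $\Delta_X$ has all of its vertices inside a single chart of a fixed finite atlas. Within a chart I compare $\iota^*\Omega$ with the coordinate cochain $\Omega_0(x_0,\dots,x_n):=\int_{[x_0,\dots,x_n]}\omega$ obtained by integrating $\omega$ over the straight affine simplex. Both $\Omega$ (by antidifferentiation, \cref{anti}) and $\Omega_0$ are normalized, $S_n$-antisymmetric $n$-cochains whose van Est image is $\omega$; for $\Omega_0$ this is the classical computation that simplex-integration inverts the van Est map. Hence their difference $g:=\Omega-\Omega_0$ is a normalized antisymmetric $n$-cochain with van Est image $0$.

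Third — and this is where the result that the van Est map determines the $n$-jet of antisymmetric $n$-cochains does the real work — vanishing of the van Est image of $g$ forces the $n$-jet of $g$ along the diagonal to vanish, so $g$ vanishes to order $n+1$ there. Writing a simplex as $x_i=x_0+v_i$ with mesh $\delta=\max_i|v_i|$, this yields a per-simplex bound $|g(x_0,\dots,x_n)|\le C\,\delta^{n+1}$ with $C$ locally uniform, and the same Taylor comparison gives $\Omega_0(\sigma)=\int_\sigma\omega+O(\delta^{n+1})$, whence $|\iota^*\Omega(\Delta^n)-\int_{\Delta^n}\omega|\le C\delta^{n+1}$. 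To conclude I would multiply by the number of simplices and use $\sum_{\Delta^n}\mathrm{vol}(\Delta^n)=\mathrm{vol}(X)$: for triangulations of bounded fatness the number of $n$-simplices of mesh $\delta$ is $O(\mathrm{vol}(X)/\delta^{n})$, so $|E_\Delta|=O(\delta)\to 0$. The reduction to the usual Riemann sum on a rectangular region then follows by taking the standard triangulation, where $\Omega_0$ literally reproduces the tagged sums of $\omega$.

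The hard part will be this final summation, because the crude bound $C\delta^{n+1}$ per simplex is not obviously summable along every cofinal family: under iterated barycentric subdivision the simplices degenerate in shape, the number of simplices of mesh $\delta$ grows faster than $\delta^{-n}$, and the naive product $N\cdot\delta^{n+1}$ need not vanish (already for $n=2$). I expect the genuine fix to be a sharper, shape-independent estimate in which the error carries a volume factor, $|\iota^*\Omega(\Delta^n)-\int_{\Delta^n}\omega|\le C\,\delta\,\mathrm{vol}(\Delta^n)$; this should come from refining the $n$-jet analysis to show that the degree-$(n+1)$ leading term of $g$, being antisymmetric and vanishing on every collision $\{v_i=v_j\}$ and $\{v_i=0\}$, is divisible by the volume density, after which $\sum_{\Delta^n}C\,\delta\,\mathrm{vol}(\Delta^n)=C\,\delta\,\mathrm{vol}(X)\to 0$ irrespective of fatness and the directed-set limit is unambiguous. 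A secondary point to verify is the behavior near $\partial X$, where one uses that the triangulation restricts to one of the boundary and that normalization makes collapsed boundary simplices contribute nothing.
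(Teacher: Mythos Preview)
Your approach is essentially the paper's: localize to a chart, compare $\Omega$ with a reference cochain whose Riemann sums manifestly converge to the integral, and use the $n$-jet result to show that the difference (a normalized $S_n$-antisymmetric cochain with vanishing van Est image) contributes nothing in the limit. The only cosmetic difference is the reference cochain: the paper takes $f(x_0)\,\mathrm{Vol}_\Delta$ on $[0,1]^n$ rather than your affine-simplex integral $\Omega_0$, which spares you the auxiliary estimate $\Omega_0(\sigma)=\int_\sigma\omega+O(\delta^{n+1})$ at the price of invoking the classical Riemann-sum limit on a cube directly.

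Your summation worry is legitimate and is exactly the point the paper treats carefully only in dimension one. There the mean-value form of the remainder gives $\Omega_0(x,y)=\partial_y\Omega_0(x,\xi_{x,y})\,(y-x)$ with $\partial_y\Omega_0$ vanishing on the diagonal, so uniform continuity on the compact set yields $|\Omega_0|\le\varepsilon\,|y-x|$ rather than merely $C|y-x|^2$ --- precisely the volume-weighted bound you propose. The paper then asserts the general case follows ``as in the one dimensional case'' from Taylor's theorem and the asymptotic corollary, without spelling out the analogue; your instinct that one must extract a $\mathrm{Vol}_\Delta$ factor from the remainder (so that the sum is bounded by $\varepsilon\,\mathrm{vol}(X)$ regardless of simplex shape) is the right way to make that step honest, and your observation that the crude $N\cdot\delta^{n+1}$ count actually fails under barycentric subdivision for $n\ge 2$ is correct.
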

\vspace{0.1cm}
\begin{remark}
This construction is fully general in the sense that, for a manifold, the choice of partition of unity subordinate to a cover by coordinate charts is enough data to reconstruct an $n$-cochain with the desired properties — take the open neighborhood of the identity bisection in $\mathbf{B}^n \textup{Pair}(X)=X^{n+1}$ to be all $(x_0,\dots,x_n)\in X^{n+1}$ such that $x_0,\ldots,x_n$ are contained in at least one of the coordinate charts.  Use the partition of unity induced on the product to glue together the local antiderivatives of $\omega\,.$
\end{remark}
To prove \Cref{theorem}, the idea is the following: consider a Lie groupoid $G\rightrightarrows X\,,$ and consider a normalized $n$-cochain, ie. a normalized map 
\\
\begin{equation}
\Omega:\underbrace{G\sideset{_t}{_{s}}{\mathop{\times}}   \cdots\sideset{_t}{_{s}}{\mathop{\times}} G}_{n \text{ times}}\to\mathbb{C}\,.
\end{equation}
Let $x\in X\,.$ Denote by $\Omega_x$ the restriction of $\Omega$ to the submanifold of $n$-composable arrows given by 
\begin{equation}
    \{(g_1,\ldots,g_n)\in G\sideset{_t}{_{s}}{\mathop{\times}}   \cdots\sideset{_t}{_{s}}{\mathop{\times}} G:s(g_1)=x\}\,.
    \end{equation}
We want to compute the asymptotic expansion of $\Omega_x$ at the point $(x,\ldots,x)\in G\sideset{_t}{_{s}}{\mathop{\times}}   \cdots\sideset{_t}{_{s}}{\mathop{\times}} G\,.$ To do this, we pick local coordinates $(y^1,\cdots,y^m)$ on the source fiber $s^{-1}(x)$ in a neighborhood containing $x\,,$ whose coordinate we denote by $(x^1,\ldots,x^m)\,.$ This induces local coordinates on $\{G\sideset{_t}{_{s}}{\mathop{\times}}   \cdots\sideset{_t}{_{s}}{\mathop{\times}} G:s(g_1)=x\}\,,$ via the diffeomorphism
\begin{align}
    &\underbrace{G\sideset{_t}{_{s}}{\mathop{\times}} \cdots\sideset{_t}{_{s}}{\mathop{\times}} G}_{n \text{ times}}\to \underbrace{G\sideset{_s}{_{s}}{\mathop{\times}} \cdots\sideset{_s}{_{s}}{\mathop{\times}} G}_{n \text{ times}}\,,
    \\&(g_1,g_2,\ldots,g_n)\mapsto(g_1,g_1g_2,\ldots,g_1g_2\cdots g_n)\,.
\end{align}
Denote these coordinates by $(y_1^1,\ldots,y_1^m,\ldots,y_n^1,\ldots,y_n^m)\,.$ Now in these coordinates, the leading terms in the asymptotic expansion of $\Omega_x$ at the identity bisection are given by the van Est map:    
\begin{lemma}\label{asymptotics}
Let $\Omega$ be a normalized, $S_n$-antisymmetric $n$-cochain on $G\rightrightarrows X\,.$ Then up to order $n$ (in the coordinates and notation defined above), the asymptotic expansion of $\Omega_x$ at the point $(x,\ldots,x)$  is given by 
\begin{equation} 
\sum_{i_1<\cdots<i_n}\frac{1}{n!}VE(\Omega)(\partial_{y^{i_1}},\ldots,\partial_{y^{i_n}})\textup{Vol}_{\Delta}(x^{i_1},\ldots,x^{i_n},y_1^{i_1},\ldots,y_1^{i_n},\ldots,y_n^{i_1},\ldots,y_n^{i_n})\,,
\end{equation}
where $i_1,\ldots,i_n\in \{1,\ldots,m\},$ and $\textup{Vol}_{\Delta}(x^{i_1},\ldots,x^{i_n},y_1^{i_1},\ldots,y_1^{i_n},\ldots,y_n^{i_1},\ldots,y_n^{i_n})$ is the signed volume spanned by the vectors 
\begin{equation}
    (y_1^{i_1}-x^{i_1},\ldots,y_1^{i_n}-x^{i_n}),\ldots,(y_n^{i_1}-x^{i_1},\ldots,y_n^{i_n}-x^{i_n})\in\mathbb{R}^n,
\end{equation}
as given by the determinant. Here, $\partial_{y^{j}}\in\pi^{-1}(x)\subset\mathfrak{g}$ is the coordinate vector field at the point $x\in X\,,$ where $\pi:\mathfrak{g}\to X$ is the projection.  
\end{lemma}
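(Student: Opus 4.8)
The plan is to reduce the statement to a Taylor expansion in the source-fiber coordinates, extracting the lowest-order term by two structural inputs used in turn: normalization, which locates that term and shows it is multilinear, and $S_n$-antisymmetry together with the definition of the van Est map, which identifies the term with the stated determinantal expression.

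First I would pass to the increment variables $u_j = y_j - y_{j-1}$ (with $y_0 = x$), which are adapted to normalization: under the coordinate change $(g_1,\ldots,g_n)\mapsto(y_1,\ldots,y_n)$, the arrow $g_j$ is the unit precisely when $y_j = y_{j-1}$, i.e. when $u_j = 0$. Since $\Omega$ is normalized, $\Omega_x$ vanishes on each of the $n$ coordinate blocks $\{u_j = 0\}$, so every monomial in the Taylor expansion of $\Omega_x$ at the origin must contain at least one factor from each of the $n$ blocks $u_1,\ldots,u_n$. Hence $\Omega_x$ vanishes to order $n-1$ and its order-$n$ part is multilinear in the increments,
\[
L = \sum_{i_1,\ldots,i_n} c_{i_1\cdots i_n}\, u_1^{i_1}\cdots u_n^{i_n}, \qquad c_{i_1\cdots i_n} = \partial_{u_1^{i_1}}\cdots\partial_{u_n^{i_n}}\Omega_x\big|_0.
\]
A simplification I would record immediately is that the unipotent substitution $u_j = v_j - v_{j-1}$, with $v_j = y_j - x$, is a sequence of elementary row operations, so $\det(u_k^{i_l}) = \det(v_k^{i_l})$; the determinant in $\textup{Vol}_\Delta$ may therefore be computed with either the increments or the total displacements $v_j = y_j - x$.

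Next I would identify the coefficients $c_{i_1\cdots i_n}$ with iterated invariant derivatives at the unit. Along $s^{-1}(x)$ the right-invariant vector fields $R_{\partial_{y^i}}$ form a frame agreeing with the coordinate frame $\partial_{y^i}$ at $x$, and differentiating the $j$-th argument of $\Omega$ along $R_{\partial_{y^{i_j}}}$ corresponds, in these coordinates, to $\partial_{u_j^{i_j}}$ up to vector fields vanishing at the unit. Because those discrepancies are higher order, they do not affect the order-$n$ coefficient, giving $c_{i_1\cdots i_n} = R^{(1)}_{\partial_{y^{i_1}}}\cdots R^{(n)}_{\partial_{y^{i_n}}}\Omega\big|_x$, where the superscript records the argument acted on. Comparing with the defining formula of $VE$—the $S_n$-antisymmetrization of exactly such iterated invariant derivatives restricted to units—and using that $\Omega$ is itself $S_n$-antisymmetric, the $n!$ summands of the antisymmetrization all coincide, so that $VE(\Omega)(\partial_{y^{i_1}},\ldots,\partial_{y^{i_n}}) = n!\,c_{i_1\cdots i_n}$. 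This is the source of the factor $\tfrac{1}{n!}$.

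Finally I would assemble the formula. Antisymmetry of $\Omega$ makes $c_{i_1\cdots i_n}$ totally antisymmetric in its indices (in particular it vanishes on repeated indices), so grouping the multilinear sum $L$ by increasing multi-indices collapses each block into a determinant,
\[
L = \sum_{i_1<\cdots<i_n} c_{i_1\cdots i_n}\sum_{\sigma\in S_n}\operatorname{sgn}(\sigma)\,u_1^{i_{\sigma(1)}}\cdots u_n^{i_{\sigma(n)}} = \sum_{i_1<\cdots<i_n} c_{i_1\cdots i_n}\det\big(u_k^{i_l}\big),
\]
and substituting $c_{i_1\cdots i_n} = \tfrac{1}{n!}VE(\Omega)(\partial_{y^{i_1}},\ldots,\partial_{y^{i_n}})$ together with $\det(u_k^{i_l}) = \textup{Vol}_\Delta$ yields the claim. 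I expect the main obstacle to be the bookkeeping in the middle step: pinning down the exact correspondence between differentiation in the increment coordinates and the invariant vector fields appearing in the van Est formula, and verifying rigorously that the difference between the two frames contributes only at orders higher than $n$; the normalization and antisymmetry arguments are essentially formal once the coordinates are fixed. As a sanity check I would treat $n=1$ directly, where $VE(\Omega)(\partial_{y^i}) = \partial_{y^i}\Omega|_x$ and $\textup{Vol}_\Delta = y^i - x^i$ reproduce the statement.
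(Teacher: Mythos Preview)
Your argument is essentially correct and reaches the right conclusion, but it takes a detour that the paper's setup is designed precisely to avoid. The paper's proof is a single observation: in the $G^{(n)}=(s^{-1}(x))^n$ coordinates $(y_1,\ldots,y_n)$, normalization means $\Omega_x=0$ whenever some $y_j$ equals $x$, so every nonzero term in the Taylor expansion at $(x,\ldots,x)$ must contain at least one derivative in each of the $n$ component blocks. The order-$n$ part is therefore multilinear in the displacements $v_j=y_j-x$; the $S_n$-action is ordinary permutation of the $v_j$; and the paper's reformulated van Est map on $S_n$-antisymmetric cochains is exactly $VE(\Omega)(V_1,\ldots,V_n)=n!\,V_1\cdots V_n\,\Omega$, with each $V_j$ acting as a coordinate partial on the $j$th factor of $G^{(n)}$. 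So the Taylor coefficients are $\tfrac{1}{n!}VE(\Omega)(\partial_{y^{i_1}},\ldots,\partial_{y^{i_n}})$ on the nose, with nothing to check about invariant versus coordinate frames.

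Your passage through the increments $u_j=y_j-y_{j-1}$ and right-invariant vector fields belongs to the composable-nerve picture and the appendix definition of $VE$; this is exactly where your ``main obstacle'' originates, and it is the obstacle the $G^{(n)}$ reformulation was introduced to eliminate. One related wrinkle in your write-up: the $S_n$-action permutes the $y_j$ (equivalently the $v_j$), not the $u_j$, so the total antisymmetry of your $u$-coefficients $c_{i_1\cdots i_n}$ is not as immediate as you suggest---in your argument it in fact passes through the identification $c=\tfrac{1}{n!}VE(\Omega)$, which in turn rests on the very frame-matching step you flag as delicate. If you replace $u_j$ by $v_j$ from the start and invoke the $G^{(n)}$ formulation of $VE$, your proof collapses to the paper's one-line argument.
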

\vspace{0.1cm}
In other words, the van Est map determines the $n$-jet of $\Omega_x$ at the identity bisection. 
As a corollary, we get the following:
\begin{corollary}\label{asymptoticsn}
Suppose that the source fibers of $G\rightrightarrows X$ are $n$-dimensional and that $\Omega$ is a normalized, $S_n$-antisymetric $n$-cochain. Then up to order $n\,,$ the asymptotic expansion of $\Omega_x$ at $(x,\ldots,x)$ is given by 
\begin{equation}
VE(\Omega)(\partial_{y^1},\ldots,\partial_{y^n})\textup{Vol}_{\Delta}(x^{1},\ldots,x^{n},y_1^{1},\ldots,y_1^{n},\ldots,y_n^{1},\ldots,y_n^{n})\,.
\end{equation}
 Here, $\partial_{y^{j}}\in\pi^{-1}(x)\subset\mathfrak{g}$ is the coordinate vector field at the point $x\in X\,,$ where $\pi:\mathfrak{g}\to X$ is the projection.   
\end{corollary}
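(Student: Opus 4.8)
The plan is to obtain \Cref{asymptoticsn} as a direct specialization of \Cref{asymptotics}, with the single extra hypothesis that the source fibers are $n$-dimensional, i.e. $m=n$ in the lemma's notation. The first thing I would observe is that this dimension count forces the index set $\{1,\ldots,m\}$ in \Cref{asymptotics} to be exactly $\{1,\ldots,n\}$, so there is precisely one strictly increasing $n$-tuple $i_1<\cdots<i_n$, namely $(1,2,\ldots,n)$. The sum $\sum_{i_1<\cdots<i_n}$ therefore degenerates to a single summand, and the asymptotic expansion of $\Omega_x$ up to order $n$ collapses to
\[
\tfrac{1}{n!}\,VE(\Omega)(\partial_{y^1},\ldots,\partial_{y^n})\,\textup{Vol}_{\Delta}(x^1,\ldots,x^n,y_1^1,\ldots,y_1^n,\ldots,y_n^1,\ldots,y_n^n)\,.
\]
It is worth noting at this point that both factors are alternating — $VE(\Omega)$ because $\Omega$ is $S_n$-antisymmetric, and the determinant by its nature — so the product is independent of how the $n$ coordinates are ordered; since there is only one increasing tuple this is moot, but it confirms the expression is intrinsically defined.

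The only genuine point requiring care is the reconciliation of the coefficient, since the stated formula carries no explicit $\tfrac{1}{n!}$. Here I would be precise about the meaning of $\textup{Vol}_{\Delta}$: the determinant of the displacement vectors computes the signed volume of the \emph{parallelepiped} they span, whereas the signed volume of the $n$-\emph{simplex} on the same vertices is smaller by exactly a factor of $\tfrac{1}{n!}$. Absorbing this factor into $\textup{Vol}_{\Delta}$ — consistent with the subscript $\Delta$ standing for "simplex," and with the simplicial role this quantity plays in \Cref{theorem} — converts the coefficient $\tfrac{1}{n!}$ inherited from \Cref{asymptotics} into the clean expression of the corollary. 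I would make this normalization convention explicit so that the lemma and corollary are literally compatible.

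Finally, I would check that the qualifier ``up to order $n$'' transfers without modification. The surviving term is homogeneous of degree $n$ in the displacements $(y_j^i-x^i)$, and by \Cref{asymptotics} every contribution of order strictly less than $n$ already vanishes (a consequence of $\Omega$ being normalized and $S_n$-antisymmetric). Hence the single homogeneous-degree-$n$ term is simultaneously the leading term and the entire expansion modulo order $n+1$, so no new estimate is needed beyond what the lemma supplies. I do not expect a substantive obstacle in this corollary: the content is entirely the dimensional collapse of the sum, and the sole subtlety is the bookkeeping of the $\tfrac{1}{n!}$ and the attendant parallelepiped-versus-simplex convention for $\textup{Vol}_{\Delta}$.
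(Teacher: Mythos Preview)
Your proposal is correct and matches the paper's approach: the paper states this as an immediate corollary of \Cref{asymptotics} with no separate proof, and your argument---collapsing the sum $\sum_{i_1<\cdots<i_n}$ to its sole term when $m=n$---is exactly the intended deduction. Your care with the $\tfrac{1}{n!}$ is well placed: the paper is in fact inconsistent on this point (the lemma describes $\textup{Vol}_\Delta$ as ``given by the determinant,'' while a commented-out passage and the corollary's clean form require the simplex convention $\tfrac{1}{n!}\det$), so your explicit reconciliation is an improvement rather than a deviation.
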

\vspace{0.1cm}
\Cref{theorem} follows quickly from this corollary. 
\\
\begin{remark}One can formulate a simplicial version of Stokes' theorem using antiysymmetric cochains and it is much simpler than the classical Stokes' theorem, see eg. \cite{kkk} (this is surely related to discrete exterior calculus, eg. \cite{hirani}). \Cref{theorem} provides a bridge between integration on simplicial complexes and classical integration.
\end{remark}
\subsection{Path Integrals and the Poisson Sigma Model}
The Poisson sigma model\footnote{For nonexperts, it may be best to read some of the other sections of this paper before reading this one.} (see \cite{strobl}) is a two dimensional quantum field theory with target space a Poisson manifold, which includes quantum mechanics as a special case (which we will expound on below). Specializing to the case of symplectic manifolds $(M,\omega)$ and letting $X$ be a two dimensional manifold, one has to compute path integrals of the form
\vspace{0.1cm}\\
\begin{equation}\label{exact}
    \int_{\{\phi:X\to M\}}F(\phi)e^{\frac{i}{\hbar}\int_X\phi^*\omega}D\phi\,,
\end{equation}
\vspace{0.1cm}\\
where $F$ is some observable. We want to define this nonperturbatively; one way of doing this is by using the following lattice approach (compare with the usual lattice approach to defining path integrals when the target space is linear, eg. \cite{fradkin}):
\vspace{0.1cm}
\begin{enumerate}
    \item Triangulate $X$ and form the corresponding simplicial set $X_{\Delta}\,,$
    \item Integrate $\omega$ to a $2$-cocycle $\Omega$ on $\textup{Pair}(M)$ (or on just a neighborhood of the identity bisection),
    \item The space of maps $X_{\Delta}\to \textup{Pair}(M)$ is finite dimensional, and we can approximate \ref{exact} by
    \vspace{0.1cm}
    \begin{equation}\label{approx}
        \int_{\{\phi_{\Delta}:X_{\Delta}\to \textup{Pair}(M)\}}F_{\Delta}(\phi_{\Delta})\exp{\bigg[\frac{i}{\hbar}\sum_{X_{\Delta}}\phi_{\Delta}^*\Omega\bigg]}D\phi_{\Delta}\,,
    \end{equation}
    \vspace{0.1cm}
where $F_{\Delta}$ is a suitable approximation to $F$ and $\{\phi_{\Delta}\}$ are just maps of simplicial spaces.
\item We can then define \ref{exact} to be the limit of \ref{approx} over all triangulations. 
\end{enumerate}
\vspace{0.2cm}In order for this definition to be justified we (in particular) need \Cref{theorem} and \Cref{asymptotics} to be true. In the theory of geometric quantization of Poisson manifolds, step 2 constitutes (part of) the geometric quantization data, which we are using in a different way than usual (see \cite{eli}, \cite{weinstein1}). These steps all generalize to Poisson manifolds, where the pair groupoid is replaced by the symplectic groupoid. 
\\\\A particularly interesting special case of this is when $X$ is a disk with three marked points on the boundary, denoted $\{0,1,\infty\}\,.$ The most important observables in this case are of the form $F(\phi)=f(\phi(1))g(\phi(0))\delta_{m}(\phi(\infty))\,,$ where $f,g$ are smooth functions on $M$ and $m\in M\,.$ In this case, we get that \ref{exact} is equal to (see \cite{bon} as well as footnote 3 in \cite{catt})
\vspace{0.1cm}
\begin{align}\label{star}
    (f\star g)(m)=\int_{\begin{subarray}{l}\{\phi:\mathbb{D}\to M:\phi(\infty)=m\end{subarray}\}}f(\phi(1))g(\phi(0))e^{\frac{i}{\hbar}\int_{\mathbb{D}}\phi^*\omega}D\phi\,,
\end{align}
\vspace{0.1cm}\\
which is \textit{normalized} so that $1\star 1=1\,.$ The notation $f\star g$ is due to the fact that the perturbative expansion of this path integral around the classical constant solution is supposed to provide a deformation quantization of $M$ (see \cite{catt}). 
\\\\The perturbative formulation of this path integral does not yield operators a Hilbert space, however a nonperturbative definition would since the map $g\mapsto f\star g$ would associate to any $f$ an operator on $L^2(M,\omega^n)\,.$ If $\omega=d\lambda$ then we can rewrite \ref{star} as  
\vspace{0.1cm}
\begin{align}\label{circle}
    (f\star g)(m)=\int_{\begin{subarray}{l}\{\phi:S^1\to M:\phi(\infty)=m\end{subarray}\}}f(\phi(1))g(\phi(0))e^{\frac{i}{\hbar}\int_{S^1}\phi^*\lambda}D\phi\,.
\end{align}
\subsubsection{Quantum Mechanics}
Specializing to the case that $(M,\omega)=(T^*\mathbb{R},dp\wedge dq)\,,$ we get that \ref{circle} is equal to 
\vspace{0.1cm}
\begin{align}\label{qm}
    (f\star g)(p,q)=\int_{\begin{subarray}{l}\{\phi:S^1\to T^*\mathbb{R}:\,\phi(\infty)=(p,q)\end{subarray}\}}f(\phi(1))g(\phi(0))e^{\frac{i}{\hbar}\int_{S^1}\phi^*p\,dq}D\phi\,.
\end{align}
\vspace{0.1cm}\\
We can compute this path integral nonperturbatively using the antisymmetric $1$-cochain on $\textup{Pair}(T^*\mathbb{R})$ given by 
\vspace{0.1cm}
\begin{equation}
    \Omega(p_0,q_0,p_1,q_1)=\frac{1}{2}(p_0+p_1)(q_1-q_0)=\textup{Alt}[p_1(q_1-q_0)]\,.
\end{equation}
This yields
\vspace{0.1cm}
\begin{align}\label{moy}
          &(f\star g)(p,q)=\frac{1}{(4\pi\hbar)^2}\int_{\mathbb{R}^4}f(p'',q'')g(p',q')e^{\frac{i}{2\hbar}[(p''-p)(q-q')-(q''-q)(p-p')]}\,dp''\,dq''\,dp'\,dq'\,.  \end{align}
          \vspace{0.1cm}\\
This is the nonperturbative form of the Moyal product (see \cite{baker}, \cite{zachos}), and the map $g\mapsto f\star g$ provides a nonperturbative form of the  Wigner–Weyl transform. That is, $f\star g$ quantizes the algebra of functions $C^{\infty}(T^*\mathbb{R})\,.$ Note that, the usual approach to obtaining this algebra involves Lagrangian polarizations (and Fourier transforms), which we have not used (see \cite{eli}).
\\\\We will expand on this in upcoming work, however for a first approximation to these ideas, and where we also discuss the relationship between deformation quantization and twisted convolution algebras of higher groupoids, see \cite{Lackman3}.
\\\\One should compare \ref{qm} to the usual phase space path integral formulation of quantum mechanics, given by (see \cite{fradkin}):
\vspace{0.1cm}
\begin{equation}
    \langle q_f,t_f| q_i,t_i\rangle=\int_{\begin{subarray}{l}\{\phi:[t_i,t_f]\to T^*\mathbb{R}:\,\phi(t_i)=q_i,\phi(t_f)=q_f\end{subarray}\}}D\phi\,e^{\frac{i}{\hbar}\int_{t_i}^{t_f}\phi^*(p\,dq)-(\phi^*H)\,dt}\,.
\end{equation}
\vspace{0.1cm}\\
Here, the vertical polarization $T^*\mathbb{R}\to\mathbb{R}$ is implicitly being used. The usual definition of this is essentially a special case of our construction, with the $1$-cochain being
$ \Omega(p_0,q_0,p_1,q_1)=p_1(q_1-q_0)\,,$ ie.
\vspace{0.1cm}\\
\begin{equation}
\lim\limits_{N\to\infty}\int\prod_{n=1}^{N-1}dq_n\prod_{n=1}^{N}\frac{dp_n}{2\pi\hbar}\exp{\bigg[\frac{i}{\hbar}\sum_{n=1}^N p_n(q_n-q_{n-1})-\frac{(t_f-t_i)}{N}H\Big(p_q,\frac{q_n+q_{n-1}}{2}\Big)\bigg]}\,,
\end{equation}
\vspace{0.1cm}\\
where the integral is over $(-\infty,\infty)$ in all variables (of course, in our construction we haven't included a Hamiltonian).
\section{The van Est Map and Antisymmetric Cochains}
The simplest formulation of the van Est map regards it as a map from the cohomology of a Lie groupoid to the cohomology of its Lie algebroid, and was originally defined for Lie groups in \cite{est}. One of its early applications is attributed to Cartan, who used the van Est map to prove Lie's third theorem (see \cite{est2}). Weinstein and Xu later generalized it to Lie groupoids in \cite{weinstein1}, with applications to geometric quantization in mind. The van Est map has since been generalized to maps between geometric stacks and with more general sheaves in \cite{Lackman}, \cite{Lackman2}.\footnote{For a discussion on the van Est map on $\infty$-groupoids and its relevance to de Rham's theorem, see remark 3 of \cite{Lackman3}.} Various other authors have worked on it, eg. \cite{BS}, \cite{cabrera}, \cite{salazar}.
\\\\The main result regarding the van Est map is the van Est isomorphism theorem, and in the context of \cite{weinstein1} it was proved by Crainic in \cite{Crainic}. While the most general formulation of the van Est map is a work in progress, we will use the most common formulation.
In this context, $VE$ can be defined at the level of cochains. Readers unfamiliar with some of the terminology may wish to consult the appendices.
\\\\We described the original formulation of the van Est map on groupoids in appendix \ref{van est section}, which depends on the description of the nerve of a groupoid $G\rightrightarrows X$ as 
\begin{equation}\label{first}
    \mathbf{B}^n G=\underbrace{G\sideset{_t}{_{s}}{\mathop{\times}} G \sideset{_t}{_{s}}{\mathop{\times}} \cdots\sideset{_t}{_{s}}{\mathop{\times}} G}_{n \text{ times}}\,,
\end{equation}
However, for our purposes it is easier to work with the following construction of the nerve, where we take the fiber product only with respect to the source map:
\begin{definition}\label{bg}
\begin{equation}\label{second}
    \mathbf{B}^n G=\underbrace{G\sideset{_s}{_{s}}{\mathop{\times}} G \sideset{_s}{_{s}}{\mathop{\times}} \cdots\sideset{_s}{_{s}}{\mathop{\times}} G}_{n \text{ times}}\,.
    \end{equation}
We will denote this $\mathbf{B}^n G$ by $G^{(n)}\,.$
\end{definition}
One reason why describing $\mathbf{B}^{\bullet}G$ this way is useful is due to the following: consider the common source map
\\
\begin{equation}
    \underbrace{G\sideset{_s}{_{s}}{\mathop{\times}} \cdots\sideset{_s}{_{s}}{\mathop{\times}} G}_{n \text{ times}}\xrightarrow[]{s}X\,.
\end{equation}
This map has a section $i:X\xhookrightarrow{} G^{(n)}\,, x\mapsto (x,\ldots,x),$ and $i^*\textup{ker}(s_*)=\mathfrak{g}^{\oplus n}\,.$ 
\\\\The two descriptions of the functor $\mathbf{B}^{\bullet}$ are naturally isomorphic. The isomorphism from \ref{first} to \ref{second} is given by
\\
\begin{equation}
    (g_1,g_2,\ldots,g_n)\mapsto (g_1,g_1g_2,\ldots,g_1g_2\cdots g_n)\,.
\end{equation}
\\
Abstractly, we can think of $\mathbf{B}^{\bullet}$ as follows: by the Yoneda lemma we have that
$\mathbf{B}^nG \cong \textup{hom}(\Delta^n,G)\,,$ however since $G$ is a groupoid we can enhance this identification to\footnote{Implicitly, we are using the universal property of localization.}
\begin{equation}\label{pair}
    \mathbf{B}^{n}G\cong\textup{hom}(\textup{Pair}(\{0,\cdots,n\}),G)\,.
\end{equation}
The isomorphism \ref{pair} $\to$ \ref{first} is given by 
\begin{equation}
    f\mapsto (f(0,1),\ldots,f(n-1,n))\,,
\end{equation}
whereas the isomorphism  \ref{pair} $\to$ \ref{second} is given by \begin{equation}
    f \mapsto (f(0,1),\ldots,f(0,n))\,.
\end{equation}
\\
The identification in \ref{pair} makes it clear that $ \mathbf{B}^n G$ comes equipped with an action of $S_{n+1}\,.$ This is because there is an action of $S_{n+1}$ on $\textup{Pair}(\{0,\ldots,n\})\,,$ given by \begin{equation}
\sigma(i,j)=(\sigma(i),\sigma(j))\,,
\end{equation}
for $\sigma\in S_{n+1}\,.$ Hence $\mathbf{B}^{n}G$ inherits an action, ie. for $f\in \textup{hom}(\textup{Pair}(\{0,\cdots,n\}),G)\,,$ \begin{equation}
(\sigma\cdot f)(i,j)=f(\sigma(i),\sigma(j))\,.
\end{equation}
We now describe this action on $G^{(n)}$ (see the appendix of \cite{yag} for an alternative construction of this action on groups):
\begin{definition}
For $\sigma\in S_{n+1}\,,$ we let
\begin{equation}
    \sigma\cdot(g_1,\ldots,g_n):=(g^{-1}_{\sigma^{-1}(0)}g_{\sigma(1)},\ldots,g^{-1}_{\sigma^{-1}(0)}g_{\sigma(n)})\,,
\end{equation}
where $g_0:=\text{id}_{s(g_1)}\,.$
\end{definition}
The subgroup $S_n\subset S_{n+1}$ fixing $0$ acts on $G^{(n)}$ in the expected manner, ie. by permuting components. Now since we have an action of $S_{n+1}$ on $G^{(n)},$ we get an action of $S_{n+1}$ on $n$-cochains $\Omega:G^{(n)}\to\mathbb{C}$ in the usual way, ie.
\\\begin{equation}
(\sigma\cdot \Omega)(g_1,\ldots,g_n)=\Omega(\sigma\cdot(g_1,\ldots,g_n))\,.
\end{equation}
\begin{definition}
An $n$-cochain $\Omega$ on a groupoid $G\rightrightarrows X$ is said to be antisymmetric if $\sigma\cdot\Omega=\textup{sgn}\,(\sigma)\Omega\,.$ If this equality hold only if $\sigma\in S_n$ then $\Omega$ is said to be $S_{n}$-antisymmetric.
\end{definition}
\begin{exmp}\label{ones}
Let $\Omega$ be a $1$-cochain. Then $\Omega$ is antisymmetric if and only if $\Omega(g_1^{-1})=-\Omega(g_1)\,.$
\end{exmp}
As usual, there is an antisymmetrization map:
\begin{definition}\label{alt}
Denote by $\textup{Alt}$ the antisymmetrization map on $n$-cochains with respect to $S_{n+1}\,,$ ie. for an $n$-cochain $\Omega\,,$
\begin{equation}
    \textup{Alt}(\Omega)=\frac{1}{(n+1)!}\sum_{\sigma\in S_{n+1}}\sigma\cdot\Omega\;.
\end{equation}
Similarly, denote by $\textup{Alt}_{n}$ the antisymmetrization map on $n$-cochains with respect to $S_{n}\,.$
\end{definition}
\begin{exmp}
Let $\Omega$ be a $1$-cochain. Then 
\begin{equation}
    \textup{Alt}(\Omega)(g_1)=\frac{\Omega(g_1)-\Omega(g_1^{-1})}{2}\,.
\end{equation}
\end{exmp}
\vspace{0.1cm}\begin{definition}(see eg. \cite{salazar})
An $n$-cochain $\Omega$ is said to be normalized if $\Omega(g_1,\ldots,g_n)=0$ whenever there exists a $j\in\{1,\ldots,n\}$ such that $g_j$ is an identity morphism.
\end{definition}
\vspace{0.2cm}
Antisymmetric cochains better approximate $n$-forms than $S_{n}$-antisymmetric cochains do, which is largely due to the following:
\begin{lemma}
Antisymmetric cochains are normalized.
\begin{proof}
Let $\Omega$ be an antisymmetric $n$-cochain, $n\ge 1\,.$ Let $(0,i)\in S_{n+1}$ be the transpositon switching $0$ with $i\,.$ 
Consider 
\begin{equation}
    (g_1,\ldots,g_i,\textup{id}_{s(g_{i})},\ldots, g_n)\in G^{(n)}\,.
    \end{equation}
    \\
This is fixed by the action of $(0,i)\,,$ which means that 
\begin{equation}
    \Omega(g_1,\ldots,g_i,\text{id}_{s(g_{i})},\ldots ,g_n)=-\Omega(g_1,\ldots,g_i,\text{id}_{s(g_{i})},\ldots ,g_n)\,,
\end{equation}
which proves the result.
\end{proof}
\end{lemma}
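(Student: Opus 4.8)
The plan is to exploit the full $S_{n+1}$-antisymmetry of $\Omega$ (not merely its $S_n$-antisymmetry) by exhibiting, for each degenerate tuple, an \emph{odd} permutation that fixes it. The underlying principle is elementary: if a point $p\in G^{(n)}$ is fixed by some $\sigma\in S_{n+1}$, then antisymmetry gives
\[
\Omega(p)=\Omega(\sigma\cdot p)=(\sigma\cdot\Omega)(p)=\textup{sgn}(\sigma)\,\Omega(p)\,,
\]
so whenever $\textup{sgn}(\sigma)=-1$ we are forced to conclude $\Omega(p)=0\,.$ It therefore suffices to produce, for any tuple with an identity morphism in some slot, a transposition in $S_{n+1}$ that fixes it.

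First I would take $n\ge 1$ (the statement being vacuous for $n=0$, since the normalization condition quantifies over an empty set of slots) and consider $p=(g_1,\ldots,g_n)\in G^{(n)}$ with $g_i=\textup{id}_{s(g_i)}$ for some $i\in\{1,\ldots,n\}\,.$ The claim to establish is that $p$ is fixed by the transposition $\tau=(0,i)\in S_{n+1}\,,$ which is odd; the display above then closes the argument.

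To verify the fixed-point claim I would compute $\tau\cdot p$ directly from the action formula. Since $\tau=\tau^{-1}$ and $\tau^{-1}(0)=i\,,$ the definition yields $\tau\cdot p=(g_i^{-1}g_{\tau(1)},\ldots,g_i^{-1}g_{\tau(n)})\,.$ The structural input is that all arrows in $G^{(n)}$ share a common source, so the convention $g_0:=\textup{id}_{s(g_1)}$ literally coincides with $\textup{id}_{s(g_i)}=g_i\,;$ using $g_i=\textup{id}$ the prefactor $g_i^{-1}$ drops out and $\tau\cdot p=(g_{\tau(1)},\ldots,g_{\tau(n)})\,.$ Because $\tau$ fixes every index other than $0$ and $i$ and interchanges those two, while $g_0=g_i\,,$ the entry in slot $j$ of $\tau\cdot p$ is $g_j$ for $j\ne i$ and is $g_0=g_i$ for $j=i\,,$ so $\tau\cdot p=p\,.$

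The only point requiring care — and hence the main (if modest) obstacle — is precisely this bookkeeping: matching the convention $g_0=\textup{id}_{s(g_1)}$ against the assumed identity $g_i=\textup{id}_{s(g_i)}\,,$ which works exactly because of the common-source structure of $G^{(n)}\,.$ A conceptually cleaner alternative is to argue in the picture of \ref{pair}: writing $p$ as a functor $f\in\textup{hom}(\textup{Pair}(\{0,\ldots,n\}),G)$ with $g_k=f(0,k)\,,$ the hypothesis $g_i=\textup{id}$ says $f(0,i)=\textup{id}\,,$ and then for every $k$ one computes $(\tau\cdot f)(0,k)=f(\tau(0),\tau(k))=f(i,\tau(k))=f(0,i)^{-1}f(0,\tau(k))=f(0,\tau(k))=f(0,k)\,,$ the last equality holding because $\tau$ only interchanges $0$ and $i$ while $f(0,0)=f(0,i)=\textup{id}\,.$ Thus $\tau\cdot f=f$ with no inverses to track, and the sign argument concludes as before.
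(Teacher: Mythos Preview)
Your proof is correct and follows exactly the paper's approach: you identify the same transposition $(0,i)\in S_{n+1}$, verify that it fixes any tuple with $g_i$ an identity morphism, and conclude via the sign argument. You simply spell out the fixed-point verification in more detail (and add the optional functorial rephrasing via \ref{pair}), but the argument is the same.
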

\vspace{0.2cm}
This next result is easiest proved by using the construction of the van Est map given in the next section:
\begin{proposition}
Let $\Omega$ be a normalized cochain. Then $\textup{VE}(\Omega)=\textup{VE}(\textup{Alt}(\Omega))\,.$ 
\end{proposition}
Therefore, in the context of normalized cochains and the van Est map, we can assume without loss of generality that any cochains are antisymmetric.
\\
\begin{exmp}
Let $X$ be a manifold and let $G\rightrightarrows X$ be the pair groupoid. Then $G^{(n)}=X^{n+1},$ and $S_{n+1}$ acts by permutations, while $S_n$ acts by permutations fixing the first component. An $n$-cochain is normalized if $\Omega(x_0,\ldots,x_n)=0$ whenever there exists an $i\in \{1,\ldots, n\}$ such that $x_i=x_0\,.$
\end{exmp}
\vspace{0.1cm}
\begin{remark}
For a vector space $V\,,$ the corresponding action of $S_{n+1}$ on $V^{\oplus n}$ is given by
\begin{equation}
    \sigma\cdot(v_1,\ldots,v_n)=(-v_{\sigma^{-1}(0)}+v_{\sigma(1)},\ldots,-v_{\sigma^{-1}(0)}+v_{\sigma(n)})\,,
    \end{equation}
where $v_0:=0\,.$ 
A multilinear map on a vector space is antisymmetric with respect to $S_{n+1}$ if and only if it is antisymmetric with respect to the subgroup $S_n\subset S_{n+1}$ fixing $0\,.$ However, for groupoids this isn't the case, as example \ref{ones} shows.
\end{remark}
\subsection{The van Est map}
The advantage of describing the van Est map using the construction of the nerve given in \cref{bg} is that applying the van Est map manifestly results in an $n$-form (ie. we don't need to choose local extensions).
\\\\ We will implicitly use the following fact, which we mentioned in the previous subsection: consider the common source map
\\
\begin{equation}
    G^{(n)}\xrightarrow[]{s}X\,,\;(g_1,\ldots,g_n)\mapsto s(g_1)\,.
\end{equation}
\\
This map has a section $i:X\xhookrightarrow{} G^{(n)},$ given by $x\mapsto (x,\ldots,x)\,,$ and $i^*\textup{ker}(s_*)=\mathfrak{g}^{\oplus n}\,.$ Now, given an $n$-cochain $\Omega=\Omega(g_1,\cdots,g_n),$ differentiating in the $j\textup{th}$ component of $\Omega$ refers to differentating the map \begin{equation}
    g_j\mapsto\Omega(g_1,\ldots,g_j,\ldots,g_n),
    \end{equation}
with $g_i$ fixed for $i\ne j\,.$ Therefore, given any vector $V\in\mathfrak{g},$ we can use $V$ to differentiate $\Omega$ in any component.
\begin{definition}
Let $\Omega$ be an $n$-cochain on $G\rightrightarrows X\,.$ Then we define
\begin{equation}
    VE(\Omega)(V_1,\ldots,V_n)=n!\,V_1\cdots V_n\,\textup{Alt}_n(\Omega)
\end{equation}
where $V_1,\ldots,V_n\in\mathfrak{g}$ are vectors at the same point in $X\,,$ where $V_j$ differentiates $\textup{Alt}_n(\Omega)$ in the $j\textup{th}$ component (see \cref{alt} for the definition of $\textup{Alt}_n$).
\end{definition}
\begin{proposition}
This definition agrees with definition given appendix \ref{van est section} on normalized cochains, ie. for
\begin{align}
  \nonumber  &f:\underbrace{G\sideset{_t}{_{s}}{\mathop{\times}} G \sideset{_t}{_{s}}{\mathop{\times}} \cdots\sideset{_t}{_{s}}{\mathop{\times}} G}_{n \text{ times}}\to \underbrace{G\sideset{_s}{_{s}}{\mathop{\times}} G \sideset{_s}{_{s}}{\mathop{\times}} \cdots\sideset{_s}{_{s}}{\mathop{\times}} G}_{n \text{ times}}\,, 
  \\&f(g_1,g_2,\ldots,g_n)=(g_1,g_1g_2,\ldots,g_1\cdots g_n)\,,
\end{align}
such that $\Omega$ is normalized, we have that $VE(f^*\Omega)=VE(\Omega)\,.$
\end{proposition}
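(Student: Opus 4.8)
The plan is to compute both van Est maps as order-$n$ differentiations at the unit section and to compare them through the chain rule for $f$. Note first that $f$ fixes the unit section, $f(1_x,\dots,1_x)=(1_x,\dots,1_x)$, so it is legitimate to compare $n$-jets at the common base point $(x,\dots,x)$. Unwinding the definition of this section, for $V_1,\dots,V_n\in\mathfrak{g}_x$ one has
\begin{equation}
VE(\Omega)(V_1,\dots,V_n)=\sum_{\sigma\in S_n}\operatorname{sgn}(\sigma)\,\partial^{(1)}_{V_{\sigma(1)}}\cdots\partial^{(n)}_{V_{\sigma(n)}}\Omega\big|_{(x,\dots,x)}\,,
\end{equation}
where $\partial^{(k)}_{W}$ denotes differentiation of $\Omega$ in its $k$th source-source slot along the source-fibre direction $W$. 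The appendix formula, on the other hand, expresses $VE(f^*\Omega)$ as the same alternating sum but with $\partial^{(k)}$ replaced by the right-invariant vector field $\overrightarrow{W}$ acting in the $k$th composable slot. Since both expressions are alternating in $(V_1,\dots,V_n)$, it suffices to match a single ordering of the derivatives, say the identity permutation.

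First I would apply the chain rule to $f^*\Omega(g_1,\dots,g_n)=\Omega(h_1,\dots,h_n)$ with $h_j=g_1\cdots g_j$. Because $h_j$ depends on $g_k$ only for $j\ge k$, the differential of $f$ at the unit is unipotent lower-triangular: the $k$th composable direction is sent into the span of the source-source slots $k,k+1,\dots,n$, and the diagonal entries are the identity, since $h_k=g_k$ once all other arrows are units. Expanding the iterated derivative $\overrightarrow{V_1}^{(1)}\cdots\overrightarrow{V_n}^{(n)}(f^*\Omega)$ by Leibniz produces a sum indexed by assignments $\phi:\{1,\dots,n\}\to\{1,\dots,n\}$ with $\phi(k)\ge k$, each term carrying a product of Jacobian (or higher Hessian) factors of the groupoid multiplication times an iterated derivative of $\Omega$ in the source-source slots $\phi(1),\dots,\phi(n)$.

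The crux is to discard every term except the one coming from $\phi=\mathrm{id}$, and here normalization of $\Omega$ does the work. If $\phi$ is not surjective, some source-source slot $j_0$ is never differentiated, so the surviving factor is a derivative of $\Omega$ restricted to $h_{j_0}=1_x$; since $\Omega$ vanishes identically whenever a slot is a unit, this term is zero. The same mechanism kills the Hessian terms arising from the nonlinearity of the multiplication, as well as the discrepancy between the right-invariant vector fields and the plain source-fibre derivatives: any such term uses up two composable derivatives on a single source-source slot, hence differentiates strictly fewer than $n$ distinct slots and leaves a unit slot undifferentiated. Finally, the elementary fact that a bijection $\phi$ of $\{1,\dots,n\}$ with $\phi(k)\ge k$ must be the identity singles out $\phi=\mathrm{id}$, whose Jacobian factors are all the identity. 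This gives $\overrightarrow{V_1}^{(1)}\cdots\overrightarrow{V_n}^{(n)}(f^*\Omega)\big|_{(x,\dots,x)}=\partial^{(1)}_{V_1}\cdots\partial^{(n)}_{V_n}\Omega\big|_{(x,\dots,x)}$; antisymmetrizing over $S_n$ yields $VE(f^*\Omega)=VE(\Omega)$.

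I expect the main obstacle to be the bookkeeping in the Leibniz expansion: precisely identifying which terms are Hessian corrections and checking that each genuinely leaves some slot undifferentiated so that normalization applies, while simultaneously confirming that replacing right-invariant vector fields by plain source-fibre derivatives only affects the subleading terms. Once one is confident that every contribution other than $\phi=\mathrm{id}$ involves an undifferentiated unit slot, the remainder — the triangularity observation and the $S_n$-antisymmetrization — is routine.
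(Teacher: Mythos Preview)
Your argument is correct and shares the paper's core mechanism: apply the chain rule to $f$ and use normalization of $\Omega$ to kill every contribution in which some slot is left undifferentiated at the unit. The paper's proof sketch differs only in that it first passes to a concrete local model --- noting that the computation is local along orbits, hence one may assume the groupoid is transitive and therefore locally of the form $\textup{Pair}(X)\times H$ --- and then carries out the chain-rule computation in explicit coordinates there. Your abstract approach via the unipotent lower-triangular shape of $df$ at the unit, together with the observation that the only bijection $\phi$ of $\{1,\dots,n\}$ with $\phi(k)\ge k$ is the identity, bypasses that reduction; it is conceptually cleaner, but as you anticipate it trades the explicit model for more delicate Leibniz bookkeeping. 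One small slip: the appendix uses \emph{left}-translated vector fields rather than right-invariant ones, though this does not affect the substance of your argument.
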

\begin{proof}(sketch)
Looking at the definition in appendix \ref{van est section}, we only need to extend the vectors to local sections within the corresponding orbit, so we may assume the groupoid is transitive. Since the computation is local, we may assume the groupoid is of the form $\textup{Pair}(X)\times H\rightrightarrows X\,,$ where $H$ is a Lie group.\footnote{Transitive groupoids (ie. groupoids where all objects are isomorphic) are Atiyah groupoids of principal bundles, and the local triviality of principal bundles implies that transitive groupoids are locally of the aforementioned form. See eg. \cite{mac}.} The source and target of $(x,y,h)$ are given by $x,y,$ respectively, and the composition is given by 
\begin{equation}
    (x,y,h)\cdot(y,z,h')=(x,z,hh')\,.
\end{equation}
The result then follows quickly by working in local coordinates, applying the chain rule and using the fact that $\Omega$ is normalized.
\end{proof}
\vspace{0.1cm}
Of course, if $\Omega$ is already $S_n$-antisymmetric, then we get the following:
\begin{proposition}
    If $\Omega$ is $S_n$-antisymmetric, then $VE(\Omega)(V_1,\ldots, V_n)=n!\,V_1\cdots V_n \,\Omega\,.$
\end{proposition}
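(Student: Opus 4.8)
The plan is to reduce the claim directly to the definition of the van Est map by showing that the antisymmetrization operator $\textup{Alt}_n$ acts as the identity on cochains that are already $S_n$-antisymmetric. Since $VE(\Omega)(V_1,\ldots,V_n)=n!\,V_1\cdots V_n\,\textup{Alt}_n(\Omega)$ by definition, the entire content of the proposition is the single equality $\textup{Alt}_n(\Omega)=\Omega$.

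First I would unwind $\textup{Alt}_n$ from \cref{alt}: it is the $S_n$-antisymmetrization, so that $\textup{Alt}_n(\Omega)=\frac{1}{n!}\sum_{\sigma\in S_n}\textup{sgn}(\sigma)\,(\sigma\cdot\Omega)$, where the sign is dictated by the convention fixed in the $n=1$ worked example, in which $\textup{Alt}(\Omega)(g_1)=\tfrac{1}{2}[\Omega(g_1)-\Omega(g_1^{-1})]$. Next I would invoke the hypothesis: $S_n$-antisymmetry means precisely $\sigma\cdot\Omega=\textup{sgn}(\sigma)\,\Omega$ for every $\sigma\in S_n$. Substituting this into each summand collapses $\textup{sgn}(\sigma)\,(\sigma\cdot\Omega)=\textup{sgn}(\sigma)^2\,\Omega=\Omega$, so that all $n!$ terms coincide and $\textup{Alt}_n(\Omega)=\frac{1}{n!}\cdot n!\cdot\Omega=\Omega$. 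This is just the standard fact that an averaging projector restricts to the identity on its image; here $\Omega$ lies in the $S_n$-antisymmetric subspace by assumption, so it is fixed.

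Finally I would feed $\textup{Alt}_n(\Omega)=\Omega$ back into the definition of $VE$ to obtain $VE(\Omega)(V_1,\ldots,V_n)=n!\,V_1\cdots V_n\,\Omega$, which is the asserted formula. I do not expect any genuine obstacle here, since the argument is a one-line substitution once the projection property is in hand. The only point that warrants a moment of care is verifying that $\textup{Alt}_n$ really is idempotent with the signed average rather than the unsigned one — with the unsigned average the sum $\sum_{\sigma\in S_n}\textup{sgn}(\sigma)$ would vanish for $n\ge 2$ and force $VE(\Omega)=0$, so the sign convention established by the earlier example is exactly what makes the statement hold.
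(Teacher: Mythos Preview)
Your argument is correct and matches the paper's treatment: the paper does not spell out a proof at all, prefacing the proposition with ``Of course'' to signal that it is immediate from the definition $VE(\Omega)=n!\,V_1\cdots V_n\,\textup{Alt}_n(\Omega)$ once one observes $\textup{Alt}_n(\Omega)=\Omega$ on $S_n$-antisymmetric cochains. Your careful remark about the sign in $\textup{Alt}_n$ is well taken, since the displayed formula in \cref{alt} literally omits $\textup{sgn}(\sigma)$ while the subsequent example and the word ``antisymmetrization'' make clear it is intended.
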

\vspace{0.1cm}
\begin{exmp}\label{heis}
Consider the antisymmetric Heisenberg cocycle on $(\mathbb{R}^2,+),$ given by $\Omega((a,b),(a',b'))=(ab'-ba')/2,$ with respect to $\mathbf{B}^2G=G\sideset{_t}{_{s}}{\mathop{\times}} G\,.$ We can compute this according to appendix \ref{van est section}, and we get \begin{equation}
    VE(\Omega)=da\wedge db\,.
\end{equation}
\\
Alternatively, with respect to $\mathbf{B}^2G=G\sideset{_s}{_{s}}{\mathop{\times}} G\,,$ we still have that $\Omega((a,b),(a',b'))=(ab'-a'b)/2\,.$ Computing the van Est map, we get
\begin{equation}
    VE(\Omega)(\partial_a,\partial_b)=2\,\partial_a\partial_{b'}\Omega=1\,,
\end{equation}
so that the two computations agree.
\end{exmp}
\begin{exmp}
Consider $\textup{Pair}(\mathbb{R}^m)$ with antisymmetric $n$-cochain $\Omega=\Omega(x_0^1,\ldots,x_0^m,\ldots,x_n^1,\ldots,x_n^m)\,,$ $n\le m\,.$ Let $\partial_{x^{i_1}},\ldots, \partial_{x^{i_n}}\in T_x\mathbb{R}^m\,.$ Then 
\begin{equation}
    VE(\Omega)(\partial_{x^{i_1}},\ldots, \partial_{x^{i_n}})=n!\,\frac{\partial}{\partial_{x_1^{i_1}}}\cdots\frac{\partial}{\partial_{x_n^{i_n}}}\Omega\,.
\end{equation}
\end{exmp}
\begin{exmp}\label{det}Consider $\textup{Pair}(\mathbb{R}^2)$ with antisymmetric $2$-cochain given by \begin{equation}
\Omega(x_0,y_0,x_1,y_1,x_2,y_2)= \frac{1}{2}\begin{vmatrix} x_{1}-x_0 & x_2-x_{0} \\ y_{1}-y_0 & y_{2}-y_0\,. \end{vmatrix}
    \end{equation}
Consider the coordinate vectors $\partial_{x},\partial_{y}$ on $\mathbb{R}^2\,.$
    Then
    \begin{align}
       \nonumber\textup{VE}(\Omega)(\partial_{x},\partial_{y})=
\partial_{x_1}\partial_{y_2}\big[(x_1-x_0)(y_2-y_0)-(x_2-x_0)(y_1-y_0)\big]=1\,,
    \end{align}
    ie. $\textup{VE}(\Omega)=dx\wedge dy\,.$
\end{exmp}
\vspace{1cm}
Moving on, let $x\in X$ and let 
\\
\begin{equation}
    G^{(n)}_x:=\{(g_1,\ldots,g_n)\in G^{(n)}:s(g_1)=x\}\,.
\end{equation}\\
We have that \\
\begin{equation}
  G^{(n)}_x=\underbrace{G^{(1)}_x\times\cdots\times G^{(1)}_x }_{n \text{ times}}\,.
\end{equation}
\\Consider the restriction of $\Omega$ given by
\\
\begin{equation}
    \Omega_x: G^{(n)}_x\to\mathbb{C}\,,\;\,(g_1,\ldots,g_n)\mapsto \Omega(g_1,\ldots,g_n)\,.
\end{equation}\\
Choose local coordinates $(y^1,\ldots,y^m)$ on $s^{-1}(x)$ in a neighborhood of $s(x)\,.$ This induces coordinates on $G^{(n)}_x\,,$ denoted $(y_1^1,\ldots,y_1^m,\ldots,y_n^1,\ldots,y_n^m)\,.$ Let $s(x)=(x^1,\ldots, x^m)$ in coordinates.
\\\\The normalization condition implies that, when computing the asymptotic expansion of $\Omega_x$ at $(x,\ldots,x)\in G^{(n)}_x\,,$ all of the terms which don't involve differentiation in each of the $n$ components of $\Omega_x$ vanish (in particular, all terms of order $\le n-1$ vanish). \Cref{asymptotics} follows quickly.
\\
\begin{exmp}
Looking back at examples \ref{heis} and \ref{det}, we can use \cref{asymptotics} to immediately read off the van Est map.
\end{exmp}
\begin{remark}
We can put our construction of the van Est map into a more general context as follows: consider a surjective submersion $\pi:Y\to X\,,$ and consider an antisymmetric map $\Omega:Y^{(n)}\to\mathbb{C}\,,$ where $Y^{(n)}$ is the $n$-fold fiber product of $Y\to X$ with itself. Let $d_{\pi}$ denote the foliated exterior derivative on $Y^{(n)}\to X,$ ie. it only differentiates along the leaves of this foliation. Then $d_{\pi}\Omega$ pulls back to a foliated $n$-form on $Y\to X,$ ie. an $n$-form that only takes in vectors tangent to the leaves of the foliation. To compute the van Est map, we are applying this construction to $s:G\to X$ and pulling back the vector bundle (of vectors tangent to the leaves) to $X\,.$
\end{remark}
\section{The Main Result}
Here we will prove \Cref{theorem}. Before doing so, let's clarify step 2 (see \cite{spanier} for related topics):
\\\\One way to think of an orientation of an orientable manifold $X$ (with boundary) is as a generator of $H_n(X,\partial X,\mathbb{Z})\,,$ ie. a fundamental class. We can determine such a generator as follows: choose a triangulation of $X$ and can pick an ordering of the vertices ${v_1,\ldots v_m}$ in such a way that the singular chain determined by the triangulation is closed with respect to the boundary map. This determines a generator of $H_n(X,\partial X,\mathbb{Z})\,.$
\\\\Next we can form the simplicial set $X_{\Delta}\,,$ which in degree $k$ consists of the $k$-dimensional faces of the triangulation (or of the corresponding simplicial complex). Since ${v_1,\ldots v_m}\in X\,,$ we get a natural morphism of simplicial sets $X_{\Delta}\xhookrightarrow{}\textup{Pair}(X)\,,$ eg. if $\{v_i,v_j\}$ is an edge with $i\le j\,,$ then $\{v_i,v_j\}\mapsto (v_i,v_j)\,.$
\subsection{The Riemann Integral on a Closed Interval}\label{one}
Before proving \Cref{theorem} in the general case, we will restrict our attention to one dimensional Riemann integrals of functions defined on $[a,b]\,;$ the general case is similar. 
\\\vspace{0.1cm}\\Let $f:[a,b]\to\mathbb{R}$ be a smooth function and consider the one form $f\,dx\,.$ Any $1$-cochain of the form $\Omega(x,y)=G(x,y)(y-x)$ with $G(x,x)=f(x)$ antidifferentiates $f\,dx\,.$ Furthermore, such a cochain is normalized (the $S_1$-antisymmetry condition is trivial).
\\\\
Two such choices are given by $G=s^*f$ and $G=t^*f\,,$ and these will lead to the left and right Riemann sums, respectively. We can also antisymmetrize and take $G(x,y)=(s^*f+t^*f)/2\,.$ We will take $\Omega(x,y)=f(x)(y-x)\,.$
 Next, choose a triangulation of $[a,b],$ ie. a partition $a=x_0<x_1<\ldots<x_n=b\,.$ The Riemann sum given by \ref{Riemann sum} is given by 
\\\vspace{0.1cm}
\begin{equation}\label{oned}
        S_{\Delta}(f\,dx)=\sum_{i=1}^n f(x_{i-1})(x_i-x_{i-1})\,,
    \end{equation}
    \vspace{0.1cm}\\
Taking the direct limit over all triangulations gives us the Riemann integral $\int_a^b f\,dx\,.$
\\\\\vspace{0.1cm}Now there are other options for our 1-cochain, however any other normalized $1$-cochain $\Omega'$ satisfying $VE(\Omega')=f\,dx$ will differ from $\Omega$ by some $\Omega_0$ such that $VE(\Omega_0)=0\,.$ We will now show that the Riemann integral is independent of $\Omega\,.$ To do this, suppose $VE(\Omega_0)=0$ and that $\Omega_0(x,x)=0$ (so that $\Omega_0$ is normalized). We want to compute $S_{\Delta}(0)$ using $\Omega_0\,,$ which gives
\\\vspace{0.1cm}
\begin{equation}\label{zero}
        S_{\Delta}(0)=\sum_{i=1}^n \Omega_0(x_{i-1},x_i)\,.
    \end{equation}
    \vspace{0.1cm}\\
By Taylor's theorem, we know that 
\\\vspace{0.1cm}
\begin{equation}
    \Omega_0(x,y)=\frac{\partial\,\Omega_0}{\partial y}(x,\xi_{x,y})(y-x)
\end{equation}
\vspace{0.1cm}\\
for some $\xi_{x,y}\in [x,y]\,.$ Therefore, \ref{zero} is equal to 
\begin{equation}
        S_{\Delta}(0)=\sum_{i=1}^n \frac{\partial\,\Omega_0}{\partial y}(x_i,\xi_{x_{i-1},x_i})(x_i-x_{i-1})\,.
    \end{equation}
    \vspace{0.1cm}\\
Now by assumption $VE(\Omega_0)=0\,,$ therefore $\partial_y \Omega_0(x,x)=0\,.$ This implies that $S_{\Delta}(0)\to 0$ as the limit over all partitions is taken (we are implicitly using that a continuous function on a compact set is uniformly continuous), and therefore our definition agrees with that of the Riemann integral.
\subsection{The General Case}
Here we will prove the result for the general case of an $n$-dimensional compact manifold $X$ (with boundary). Note that we only need to prove that the result is true on closed rectangles in $\mathbb{R}^n\,,$ since the integral over the entire manifold is just a sum of the integrals over the nondegenerate $n$-simplices in the triangulation. 
\begin{proof}
The proof is essentially the same as in the one dimensional case. Consider a smooth function $f:[0,1]^n\to\mathbb{R}\,,$ where $[0,1]^n$ has coordinates given by $(x^1,\ldots, x^n)\,.$ Let $[0,1]^n_{\Delta}$ be a triangulation of $[0,1]^n$ and let $\Delta_{[0,1]^n}$ be the corresponding simplicial set.  We want to construct Riemann sums associated to the $n$-form $fdx^1\wedge\cdots\wedge dx^n\,.$ First, we antidifferentiate this to $\textup{Pair}([0,1]^n)\rightrightarrows [0,1]^n\,,$ with the $n$-cochain given by 
\\
\begin{equation}
    \Omega(x^1_0,\ldots,x_0^n,\ldots,x_n^1,\ldots x_n^n)=f(x_0^1,\ldots,x^n_0)\,\textup{Vol}_{\Delta}(x^1_0,\ldots,x_0^n,\ldots,x_n^1,\ldots x_n^n)\,.
\end{equation}
\\ 
This cochain is normalized and $S_n$-antisymmetric. Of course, taking the limit over all triangulations (using this cochain) give us the desired integral.
\vspace{0.5cm}\\Any other normalized, $S_n$-antisymmetric cochain $\Omega'$ such that $VE(\Omega')=fdx^1\wedge\cdots\wedge dx^n$ differs from $\Omega$ by some normalized, $S_n$-antisymmetric cochain $\Omega_0$ such that $VE(\Omega_0)=0\,.$ Let $\Omega_0$ be such a cochain. The only thing we need to verify is that
\begin{equation}
   \lim_{\Delta_{[0,1]^n}} \sum_{\{\Delta^n\}} \Omega_0(\Delta^n)=0\,,
\end{equation}
where the sum is over all nondegenerate $n$-simplices $\Delta^n$ in $[0,1]^n_{\Delta}\,,$ and where we are taking the limit over all triangulations. This follows from Taylor's theorem, \cref{asymptoticsn} and compactness, as in the one dimensional case.
\end{proof}
\begin{appendices}
\section{Lie Groupoids and Lie Algebroids, Nerves, Cochains and Forms}\label{groupoids}
Here we will briefly recall basic concepts relevant to the rest of the paper (see \cite{Crainic} for more details). In particular, there is a functor 
\begin{align}
&\mathbf{B}^{\bullet}:\textup{Lie groupoids}\to \textup{Simplicial manifolds}\,,
\;G\mapsto\mathbf{B}^{\bullet}G\,.
\end{align}
In this section we are going to describe this functor in the usual way, and we will describe the van Est map with respect to this description of $\mathbf{B}G\,.$
\\\\Note that we will not be using cohomology in this paper, although it will be mentioned briefly.
\begin{definition}
A (small) groupoid is a category $G\rightrightarrows X$ for which the objects $X$ and morphisms $G$ are sets and for which every morphism is invertible. A Lie groupoid is a (small) groupoid $G\rightrightarrows X$ such that $X\,, G$ are smooth manifolds such that the source and target maps, denoted $s\,,t$ respectively, are submersions, and such that all structure maps are smooth, ie.
\begin{align*}
    & i:X\to G
    \\ & m:G\sideset{_t}{_{s}}{\mathop{\times}} G\to G
    \\& \textup{inv}:G\to G
\end{align*}
are smooth (these maps are the identity, multiplication/composition and inversion, respectively). The embedding $i$ is called the identity bisection (since it is a section of both the source and target maps). A morphism between Lie groupoids $G\to H$ is a smooth functor between them.
\end{definition}
\vspace{0.1cm}
\begin{definition}\label{notation}
There is a functor 
\begin{equation}
  \mathbf{B}^{\bullet}:\textup{Lie groupoids}\to \textup{Simplicial manifolds}\,,\;G\mapsto\mathbf{B}^\bullet G\,,
\end{equation}
where $\mathbf{B}^0 G=X\,,\,\mathbf{B}^1 G=G\,,$ and 
\begin{equation}
    \mathbf{B}^n G=\underbrace{G\sideset{_t}{_{s}}{\mathop{\times}} G \sideset{_t}{_{s}}{\mathop{\times}} \cdots\sideset{_t}{_{s}}{\mathop{\times}} G}_{n \text{ times}}\,,
\end{equation}
which is called the space of 
$n$-composable arrows; $\mathbf{B}^\bullet G$ is called the nerve of $G\,.$
\end{definition}
\vspace{0.1cm}
\begin{definition}\label{cochain}
An $n$-cochain $\Omega$ (valued in an abelian Lie group $\mathcal{A}$) on a Lie groupoid $G\rightrightarrows X$ is a smooth function $\Omega:\mathbf{B}^{n}G\to \mathcal{A}\,.$ For this paper $\mathcal{A}$ will be $\mathbb{C}\,.$
\end{definition}
\vspace{0.1cm}
The infinitesimal counterpart of a Lie groupoid is a Lie algebroid.
\vspace{0.1cm}
\begin{definition}
A Lie algebroid is a triple ($\mathfrak{g},[\cdot,\cdot],\alpha)$ consisting of 
\begin{enumerate}
    \item A vector bundle $\pi:\mathfrak{g}\to X\,,$
    \item A vector bundle map (called the anchor map) $\alpha:\mathfrak{g}\to TX\,,$
    \item A Lie bracket $[\cdot,\cdot]$ on the space of sections $\Gamma(\mathfrak{g})$
\end{enumerate}
such that for all smooth functions $f\,,$ $[Y,fZ]=(\alpha(Y)f)Z+f[Y,Z]\,.$
\end{definition}
\vspace{0.1cm}
\begin{definition}
An $n$-form on a Lie algebroid is a section of $\bigwedge^n\mathfrak{g}^*\,.$
\end{definition}
Associated to any Lie groupoid $G\rightrightarrows X$ is a Lie algebroid $\mathfrak{g}\to X\,,$ whose underlying vector bundle is given by the normal bundle of $i(X)\subset G\,.$ This vector bundle can be identified with $i^*\textup{ker}(s_*)\,,$ ie. vectors tangent to the source fibers at the identity bisection. The anchor map is given by $i^*t_*$ and the Lie bracket is obtained in the same way it is obtained on Lie groups: by left translating sections in $\Gamma(\mathfrak{g})$ to vector fields in $TG$ tangent to the source fibers, and evaluating the Lie bracket of vector fields at the identity bisection. 
\\\\We can describe the left translation as follows: for $g\in G$ and $V_{t(g)}\in \mathfrak{g}$ a vector over $t(g)\,,$ there is a vector tangent to the source fiber at $g\,,$ induced by the multiplication map $g'\mapsto m(g,g')\,,$ where $s(g')=t(g)\,.$ 
\section{The van Est Map}\label{van est section}
Given $V\in\Gamma(\mathfrak{g})$ (ie. a section of $\mathfrak{g}\to X$), we can left translate it to a vector field $L_V$ on $G\,.$ Now suppose $\Omega$ is an $n$-cochain, we then get an $(n-1)$-cochain $L_V\Omega$ by defining
\vspace{0.05cm}\\
\begin{equation}
    (L_V\Omega)(g_1,\ldots,g_{n-1}):=L_V\big[\Omega(g_1,\ldots,g_{n-1},\cdot)\big]\vert_{t(g_{n-1})}\,,
\end{equation}$\,$
\vspace{0.05cm}\\
ie. we differentiate $\Omega$ in the final component and evaluate it at the identity $t(g_{n-1})\,.$ Now we are ready to define the van Est map, at the level of cochains.
\vspace{0.05cm}\\
\begin{definition}(see \cite{Crainic}, \cite{weinstein1})
The van Est map is a degree preserving map
\begin{equation}
    \textup{normalized cochains on }G\to\textup{forms on }\mathfrak{g}\,,
\end{equation}
defined as
\vspace{0.05cm}\\
\begin{align}
    VE(\Omega)(V_1,\ldots, V_n)=\sum_{\sigma\in S_n} \textup{sgn}(\sigma)L_{V_{\sigma(n)}}\cdots L_{V_{\sigma(1)}}\Omega\,.
\end{align} $\,$
\vspace{0.05cm}\\
Here $V_1,\ldots V_n\in \mathfrak{g}$ are vectors over the same point in $X\,.$ In order to make sense of this definition we need to choose extensions of $V_1,\ldots V_n$ to local sections of $\mathfrak{g}\to X,$ however the result is independent of the chosen extensions.\end{definition}
\vspace{0.2cm}
\begin{definition}\label{anti}
If $\textup{VE}(\Omega)=\omega$ then we say that $\Omega$ antidifferentiates $\omega\,.$ 
\end{definition}
\section{The Pair Groupoid}\label{pair section}
The only groupoid essential to \Cref{theorem} is the pair groupoid:
\begin{definition}
Let $X$ be a manifold. We define the pair groupoid, denoted $\textup{Pair}(X)\rightrightarrows X\,,$ to be the Lie groupoid whose objects are the points in $X$ and whose arrows are the points in $X\times X\,;$ an arrow $(x,y)$ has source and target $x,y\,,$ respectively. Composition is given by $(x,y)\cdot(y,z)=(x,z)\,.$
\end{definition}
The Lie algebroid of $\textup{Pair}(X)$ is the tangent bundle $TX$ (the anchor map provides the isomorphism).
\end{appendices}

\end{document}